\newtheorem{theorem}{Theorem}
\newtheorem{definition}{Definition}
\newcommand{\vcentergraphics}[1]{\ensuremath{\vcenter{\hbox{\includegraphics{#1}}}}}
\newcommand{\st}[2]{\ensuremath{\vcenter{\hbox{\scalebox{.7}{\includegraphics{ternary_tree_#1-#2}}}}}}
\newcommand{\seqnum}[1]{\href{http://oeis.org/#1}{\underline{#1}}}
\begin{document}

\begin{center}
\vskip 1cm{\LARGE\bf Pattern Avoidance in Ternary Trees
}
\vskip 1cm
\large
Nathan Gabriel\footnote{Partially supported by NSF grant DMS-0851721}\\
Department of Mathematics\\
Rice University\\
Houston, TX 77251, USA \\
\ \\
Katherine Peske\footnotemark[1]\\ 
Department of Mathematics and Computer Science\\
Concordia College\\
Moorhead, MN 56562, USA\\
\ \\
Lara Pudwell\footnotemark[1]  \\
Department of Mathematics and Computer Science\\
Valparaiso University\\
Valparaiso, IN 46383, USA\\
\href{mailto:Lara.Pudwell@valpo.edu}{\tt Lara.Pudwell@valpo.edu}\\
\ \\
Samuel Tay\footnotemark[1]\\
Department of Mathematics \\
Kenyon College\\
Gambier, OH 43022, USA
\end{center}

\begin{abstract}
This paper considers the enumeration of ternary trees (i.e., rooted ordered trees in which each vertex has 0 or 3 children) avoiding a contiguous ternary tree pattern.  We begin by finding recurrence relations for several simple tree patterns; then, for more complex trees, we compute generating functions by extending a known algorithm for pattern-avoiding binary trees.  Next, we present an alternate one-dimensional notation for trees which we use to find bijections that explain why certain pairs of tree patterns yield the same avoidance generating function.  Finally, we compare our bijections to known ``replacement rules'' for binary trees and generalize these bijections to a larger class of trees.
\end{abstract}

\section{Introduction}\label{S:Intro}

The notion of one object avoiding another has been studied in permutations, words, partitions, and graphs.  Although pattern avoidance has proven to be a useful language to describe connections between various combinatorial objects, it has also attracted broad interest as a pure enumerative topic.  One combinatorial problem that has received much attention in recent years is to count the number of permutations of length $n$ avoiding a certain smaller permutation.  Here, permutation $\pi$ avoiding permutation $\rho$ means that $\pi$ has no subsequence that is order-isomorphic to $\rho$.  Although the classical case of the permutation pattern problem allows $\rho$ to be given as \emph{any} subsequence of $\pi$, a special case that can be attacked successfully via a variety of techniques is studying when $\pi$ contains $\rho$ as a consecutive subpermutation.  This latter question can be answered by a variety of techniques including inclusion-exclusion.  There also exist algorithmic techniques, such as the Goulden-Jackson cluster method \cite{GJ79, NZ99} to approach this question using generating functions.  Two natural questions arise: ``Given a permutation $\rho$, how many permutations of length $n$ avoid $\rho$?''\ and ``When do two forbidden permutations $\rho_1$ and $\rho_2$ have the same avoidance generating function?''

In this paper we consider the analogous questions for plane trees. All trees in the paper are rooted and ordered.  We will focus on ternary trees, that is, trees in which each vertex has 0 or 3 (ordered) children.  A vertex with no children is a \emph{leaf} and a vertex with 3 children is an \emph{internal vertex}.  A ternary tree with $k$ internal vertices has $2k+1$ leaves, and the number of such trees is $\frac{\binom{3k}{k}}{2k+1}$ (OEIS \seqnum{A001764}).  It is clear then that there only exist ternary trees with an odd number of leaves.  The first few ternary trees are depicted in Figure \ref{F:trees}.

Conceptually, a plane tree $T$ avoids a tree pattern $t$ if there is no instance of $t$ anywhere inside $T$. Pattern avoidance in vertex-labeled trees has been studied in various contexts by Steyaert and Flajolet \cite{SF83}, Flajolet, Sipala, and Steyaert \cite{FSS90}, Flajolet and Sedgewick \cite{FS09}, and Dotsenko \cite{DTBA}.  Recently, Khoroshkin and Piontkovski \cite{KPTBA} considered generating functions for general unlabeled trees but in a different context.

In 2010, Rowland \cite{Rowland10} explored contiguous pattern avoidance in binary trees (that is, rooted ordered trees in which each vertex has 0 or 2 children).  He had two motivations for choosing these particular trees; first, there is a clear and natural definition of what it means for a rooted ordered tree to contain a contiguous ordered pattern that is unclear for general trees, and second, there is natural bijection between $n$-leaf binary trees and $n$-vertex trees.  His study had two main objectives.  First, he developed an algorithm to find the generating function for the number of $n$-leaf binary trees avoiding a given tree pattern; he adapted this to count the number of occurrences of the given pattern.  Second, he determined equivalence classes for binary tree patterns, classifying two trees $s$ and $t$ as equivalent if the same number of $n$-leaf binary trees avoid $s$ as avoid $t$ for $n \geq 1$.  He completed the classification for all binary trees with at most eight leaves, using these classes to develop replacement bijections between equivalent binary trees.

In this paper, we extend Rowland's work by exploring pattern avoidance in ternary trees, and in some cases to general $m$-ary trees (that is, trees where each vertex has 0 or $m$ children).  We first compute recurrence relations to count trees that avoid ternary tree patterns with at most seven leaves.  Next, we adapt Rowland's algorithm to find functional equations for the avoidance generating functions of arbitrary ternary tree patterns.  Finally, we give bijections between trees avoiding several pairs of equivalent tree patterns, and begin generalizing this process to fit the more general case of $m$-ary trees.  The appendix contains all the equivalence classes of ternary tree patterns with at most nine leaves found using the \texttt{Maple} package TERNARYTREES.  The \texttt{Maple} package itself is given at the third author's website (\url{http://faculty.valpo.edu/lpudwell/maple.html}).

\subsection{Definitions and Notation}\label{S:Defs}

Following Rowland's definition of avoidance, a ternary tree $T$ \emph{contains} $t$ as a tree pattern if $t$ is a contiguous, rooted, and ordered subtree of $T$.  Conversely, $T$ \emph{avoids} $t$ if there is no such subtree of $T$.  For example, consider the three trees shown in Figure \ref{Feg1}.  $T$ contains $t$ because this pattern occurs beginning at the center child of the root of $T$ (see bolded subtree).  However, $T$ avoids $s$ because no vertex in $T$ has children extending from both its left and center children.

\begin{figure}[htb]
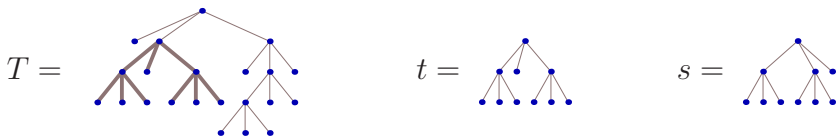

\begin{center}
$T=$  \vcentergraphics{ternary_tree_15} \hspace{.4in}
$t=$ \vcentergraphics{ternary_tree_7-2} \hspace{.4in}
$s=$  \vcentergraphics{ternary_tree_7-1}
\end{center}
\caption{Three ternary trees}
\label{Feg1}
\end{figure}

We define $\text{Av}_t(n)$ to be the set of $n$-leaf ternary trees that avoid the pattern $t$, and $\text{av}_t(n)= \left| \text{Av}_t(n) \right|$.  We will be particularly interested in determining the generating function $$\displaystyle{g_t(x)=\sum_{n=0}^\infty \text{av}_t(n) x^n}$$ for various patterns $t$.

Before we explore particular ternary tree patterns, we list all of the 3, 5, and 7-leaf ternary trees.  Note, however, if $t^r$ is the left--right reflection of $t$, then $\text{av}_t(n)=\text{av}_{t^r}(n)$ by symmetry, so left--right reflections are omitted.  We label trees with a double subscript notation.  The first subscript gives the number of leaves of the tree, and the second subscript distinguishes between distinct tree patterns of the same depth.  We will use these labels throughout the remainder of the paper.

\begin{figure}
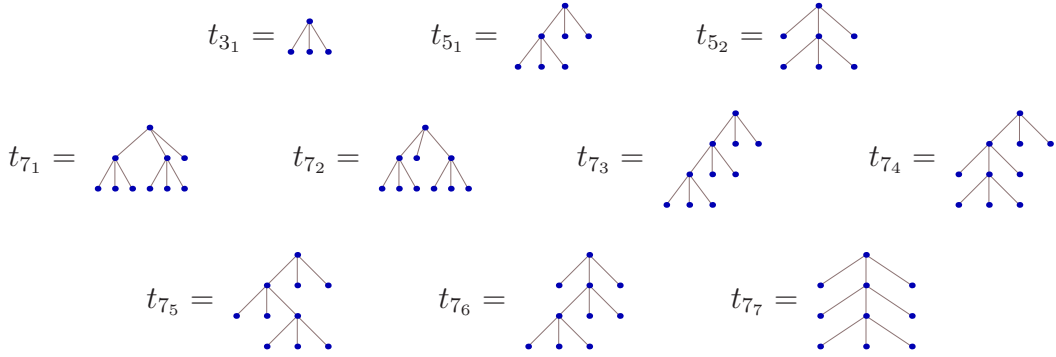

\begin{center}
$t_{3_1}=$  \vcentergraphics{ternary_tree_3-1} \hspace{.4in}
$t_{5_1}=$  \vcentergraphics{ternary_tree_5-1} \hspace{.4in}
$t_{5_2}=$  \vcentergraphics{ternary_tree_5-2} \\ \vspace{3mm}
$t_{7_1}=$  \vcentergraphics{ternary_tree_7-1} \hspace{.4in}
$t_{7_2}=$  \vcentergraphics{ternary_tree_7-2} \hspace{.4in}
$t_{7_3}=$  \vcentergraphics{ternary_tree_7-3} \hspace{.4in}
$t_{7_4}=$  \vcentergraphics{ternary_tree_7-4} \\ \vspace{3mm}
$t_{7_5}=$  \vcentergraphics{ternary_tree_7-5} \hspace{.4in}
$t_{7_6}=$  \vcentergraphics{ternary_tree_7-6} \hspace{.4in}
$t_{7_7}=$  \vcentergraphics{ternary_tree_7-7}\\
\end{center}
\caption{3, 5, and 7-leaf ternary trees}
\label{F:trees}
\end{figure}

\section{Recurrences for Simple Tree Patterns}\label{S:recs}

In this section, we find recurrence relations for the number of trees avoiding several of the trees in Figure \ref{F:trees}.  For each tree, we discuss the structure of trees that avoid the given tree pattern, how a recurrence and generating function can be found from this structure, and we list any other equivalent tree patterns.  If $t$ is clear from context, we will simply write $\text{Av}(n)$ and $\text{av}(n)$ in lieu of $\text{Av}_t(n)$ and $\text{av}_t(n)$.

\subsection{Avoiding \texorpdfstring{$t_{5_1}$}{t51} and \texorpdfstring{$t_{5_2}$}{t52}}

To find $\text{av}_{t_{5_1}}(n)$, let us look at how an $n$-leaf tree $T$ must be structured in order to avoid $t_{5_1}$. Consider any internal vertex $v$ of $T$.  $v$'s left child can have no descendants, thus it must be a leaf. $v$'s center child can be the root of a subtree of any number of leaves $k$, where $1 \leq k \leq n-2$.  Finally, $v$'s right child can also be the root of a subtree, but because there are $n$ total leaves, this subtree must have precisely $n-k-1$ leaves.  Thus, there are $\text{av}(k)$ possible subtrees beginning at $v$'s center child, and $\text{av}(n-k-1)$ possible subtrees at $v$'s right child that also avoid $t_{5_1}$.  Taking the summation of these over the possible values of $k$ gives the recurrence relation $$\text{av}(n)=\sum_{k=1}^{n-2} \text{av}(k) \text{av}(n-k-1) \text{ where } n \geq 3.$$

Our initial conditions for this recurrence are $\text{av}(0) = 0$, because there are no ternary trees with 0 leaves; $\text{av}(1) = 1$, because there is one tree with one leaf, and it avoids any tree pattern with more than one leaf; and $\text{av}(2) = 0$, because there are no trees with 2 leaves.  We can now compute $g_{t_1}(x) = \sum_{k=0}^\infty \text{av}(k) x^k$ using standard techniques to obtain
$$g_{t_1}(x)=\frac{1 - \sqrt{1-4x^2}}{2x}.$$  The first few terms of this sequence are (for $n \geq 0$) $$0,1,0,1,0,2,0,5,0,14,\dots$$

Two things are worth noting about this avoidance sequence.  First, the non-zero terms are the Catalan numbers (OEIS \seqnum{A000108}).  Second, the sequence  is interspersed by zeros because there are no ternary trees with an even number of leaves.  This second observation will be true for the avoidance sequence of any ternary tree pattern.

For trees avoiding $t_{5_2}$, we only need to make one alteration; namely, that it is the center child, instead of the leftmost child, of each vertex that cannot have any children.  Therefore, we find that $$g_{t_{5_1}}(x)=g_{t_{5_2}}(x)=\frac{1 - \sqrt{1-4x^2}}{2x}.$$

\subsection{Avoiding \texorpdfstring{$t_{7_1}$}{t71} and \texorpdfstring{$t_{7_2}$}{t72}}

Next, we find the number of $n$-leaf trees that avoid $t_{7_1}$ such that $n\geq3$. As before, we consider any internal vertex $v$ of a tree $T$ that avoids $t_{7_1}$.  There are two nonexclusive possibilities for which of $v$'s children are internal vertices. First, $v$'s leftmost child has no children, but both its center and right children can. Otherwise, $v$'s center child has no children, but both its left and right children can.  These two cases are equivalent to avoiding $t_{5_1}$ and $t_{5_2}$, respectively.  However, this double-counts one instance: that is, when both the left and the center child of $v$ have no children. There are exactly $\text{av}(n-2)$ trees counted by both of the first two cases. Subtracting this from the recurrence relation, we are left with

$$\text{av}(n) = 2\sum_{k=1}^{n-2}\text{av}(k) \text{av}(n-k-1) - \text{av}(n-2).$$

Our initial conditions for this recurrence relation are again $\text{av}(0) = 0$, $\text{av}(1) = 1$, and $\text{av}(2) = 0$.  Using standard techniques, we obtain $$g_{t_{7_1}}(x) = \frac{x^2 +1 - \sqrt{x^4 - 6x^2 +1}}{4x},$$ which gives the Little Schr\"{o}der numbers (OEIS \seqnum{A001003}), interspersed by zeros:  0, 1, 0, 1, 0, 3, 0, 11, 0, 45, 0, 197, 0,$\dots$ 

This is also the avoidance sequence for $t_{7_2}$.  As before, two cases exist for avoiding $t_{7_2}$ (either the left and center or the right and center children of $v$ have descendants), as well as a term that needs to be subtracted to avoid double-counting (when neither the left nor the right children of $v$ have their own children).  Thus, we have $$g_{t_{7_1}}(x)=g_{t_{7_2}}(x) = \frac{x^2 +1 - \sqrt{x^4 - 6x^2 +1}}{4x}.$$

Before considering other tree patterns, we further examine the connection between trees avoiding $t_{7_2}$ and the Little Schr\"{o}der numbers ($s_n$).  To do this, we look at one well-known combinatorial interpretation of the Little Schr\"{o}der numbers: $s_n$ is the number of binary trees with $n$ vertices and with each right edge ``colored'' to be either solid or dashed \cite{Stanley99}.  We note that elsewhere in this paper, we have concerned ourselves with \emph{strictly} binary trees (each internal vertex has precisely 2 children) or \emph{strictly} ternary trees (each internal vertex has precisely 3 children).  In the current interpretation of the Little Schr\"{o}der numbers, however, these binary trees are not strict; that is, an internal vertex may have 1 or 2 children.  Consider the following map  from this set of colored (\emph{non-strict}) binary trees to the set of the (\emph{strict}) ternary trees avoiding $t_{7_2}$:

\begin{enumerate}
\item For each vertex $v$ in the binary tree, draw a vertex $v^*$.
\item Consider each parent-child pair in the binary tree:
\begin{itemize}
\item If $w$ is the left child of $v$, then  $w^*$ is the center child of $v^*$.
\item If $w$ is the right child of $v$ via a solid edge, then $w^*$ is the left child of $v^*$.
\item If $w$ is the right child of $v$ via a dashed edge, then $w^*$ is the right child of $v^*$.
\end{itemize}
\item For each vertex $v^*$ created in step 1, if $v^*$ has 0, 1, or 2 children, add children until $v^*$ has exactly 3 children.
\end{enumerate}

For example, the colored binary tree in Figure \ref{Fi:schroder} is mapped to the ternary tree in the same Figure.

\begin{figure}[bht]
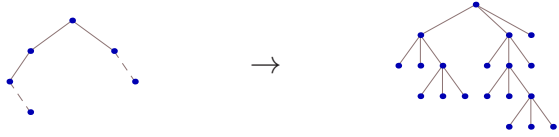

\begin{center}
\vcentergraphics{Schroeder_tree-binary} \hspace{.4in} $\rightarrow$ \hspace{.4in} \vcentergraphics{Schroeder_tree}
\end{center}
\caption{A colored binary tree and its corresponding $t_{7_2}$-avoiding ternary tree}
\label{Fi:schroder}
\end{figure}

Note that any ternary tree that is produced by this algorithm certainly avoids $t_{7_2}$ since a vertex will not have both a solid right edge and a dashed right edge at the same time, and accordingly a vertex in the resulting ternary tree will never have both a right child and a left child with children of their own.  This process has an obvious inverse.

As an example, look at $s_3=11$.  There are eleven 3-vertex colored binary trees, and eleven 5-leaf trees avoiding $t_{7_2}$.  Each colored binary tree is shown with its image under our map in Figure \ref{F:sch}.

\begin{figure}[bht]
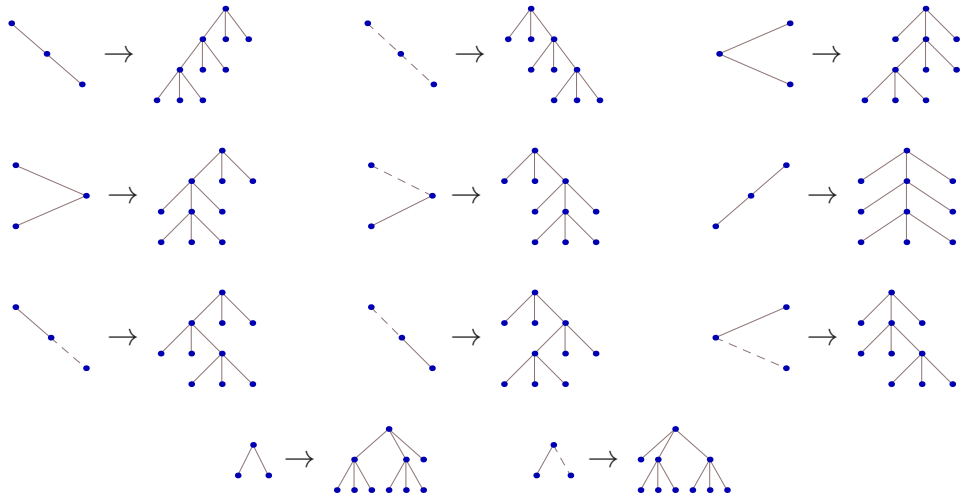

\begin{center}

\vcentergraphics{ternary_tree_7-3-binary} $\rightarrow$ \vcentergraphics{ternary_tree_7-3} \hspace{.4in}
\vcentergraphics{ternary_tree_7-3-reflected-binary} $\rightarrow$ \vcentergraphics{ternary_tree_7-3-reflected} \hspace{.4in}
\vcentergraphics{ternary_tree_7-6-binary} $\rightarrow$ \vcentergraphics{ternary_tree_7-6} \\ \vspace{3mm}
\vcentergraphics{ternary_tree_7-4-binary} $\rightarrow$ \vcentergraphics{ternary_tree_7-4} \hspace{.4in}
\vcentergraphics{ternary_tree_7-4-reflected-binary} $\rightarrow$ \vcentergraphics{ternary_tree_7-4-reflected} \hspace{.4in}
\vcentergraphics{ternary_tree_7-7-binary} $\rightarrow$ \vcentergraphics{ternary_tree_7-7} \\ \vspace{3mm}
\vcentergraphics{ternary_tree_7-5-binary} $\rightarrow$ \vcentergraphics{ternary_tree_7-5} \hspace{.4in}
\vcentergraphics{ternary_tree_7-5-reflected-binary} $\rightarrow$ \vcentergraphics{ternary_tree_7-5-reflected} \hspace{.4in}
\vcentergraphics{ternary_tree_7-6-reflected-binary} $\rightarrow$ \vcentergraphics{ternary_tree_7-6-reflected} \\ \vspace{3mm}
\vcentergraphics{ternary_tree_7-1-binary} $\rightarrow$ \vcentergraphics{ternary_tree_7-1} \hspace{.4in}
\vcentergraphics{ternary_tree_7-1-reflected-binary} $\rightarrow$ \vcentergraphics{ternary_tree_7-1-reflected} 
\end{center}
\caption{Mapping colored binary trees to $t_{7_2}$-avoiding ternary trees}
\label{F:sch}
\end{figure}

\subsection{Avoiding \texorpdfstring{$t_{7_3}$}{t73} and \texorpdfstring{$t_{7_7}$}{t77}}

To find the number of $n$-leaf trees that avoid $t_{7_3}$, $n\geq3$, we consider two cases for any internal vertex $v$ of a tree $T$ that avoids $t_{7_3}$. First, $v$'s left child has no children, while the center and right children are roots of subtrees with $k$ leaves and $n-k-1$ leaves respectively with $1 \leq k \leq n-2$. The second case is when $v$'s left child has three children; to avoid $t_{7_3}$, a left-vertex child cannot have another consecutive left-vertex child. The four other vertices (the center and right children of $v$ and the center and right children of $v$'s left child) are the roots of subtrees with $\ell$, $m$, $k$, and $n-\ell-m-k-1$ leaves respectively with $1 \leq \ell,m,k \leq n-4$. Therefore, $\text{av}_{t_{7_3}}(n)$ is given by the sum of these two cases
$$\text{av}(n)= \sum_{k=1}^{n-2} \text{av}(k) \text{av}(n-k-1) + \sum_{\ell=1}^{n-4} \sum_{m=1}^{n-\ell-3} \sum_{k=1}^{n-\ell-m-2} \text{av}(\ell) \text{av}(m) \text{av}(k) \text{av}(n-\ell-m-k-1)$$

To find the recurrence relation for trees avoiding $t_{7_7}$, we see that instead of avoiding two consecutive left-children vertices, we avoid two consecutive middle-children vertices. Therefore, $\text{av}_{t_{7_3}}(n)=\text{av}_{t_{7_7}}(n)$ for $n \geq 1$.  From this recurrence we compute the avoidance sequence: 0, 1, 0, 1, 0, 3, 0, 11, 0, 46, 0, 207, 0, $\dots$ (OEIS \seqnum{A006605} interspersed by zeros).

Clearly, it would be extremely difficult to solve this recurrence directly for the generating function $g_{t_{7_3}}(x)$.  It turns out that tree patterns $t_{7_4}$, $t_{7_5}$, and $t_{7_6}$, have the same avoidance generating function as $t_{7_1}$ and $t_{7_2}$, but we have not been able to find their recurrence relations by hand with an argument parallel to those above because complex problems arise with overcounting and undercounting. Instead, we now adapt Rowland's generating function algorithm for trees avoiding binary tree patterns to deal with ternary tree patterns.

\section{A Generating Function Algorithm}\label{S:gfAlgorithm}

As we saw in the previous section, it is straightforward to compute a given ternary tree pattern's avoidance generating function by hand for a few small tree patterns.  However, this type of computation quickly becomes impractical for increasingly complex tree patterns.  For this reason, we develop an algorithm to find a functional equation satisfied by the avoidance generating function $g_t(x)$ for any ternary tree pattern $t$.  First, however, we make one notational adjustment.  Now, let $g_{(t;p)}(x)$ be the generating function for the number of $n$-leaf ternary trees that avoid $t$ and contain the tree pattern $p$ at their root.  Recall, we have already specified that tree $T$ contains $t$ as a pattern if $T$ contains $t$ as a contiguous, rooted, ordered subgraph.  $T$ contains pattern $p$ at the root if $T$ contains a copy of $p$ where the root of $p$ coincides with the root of $T$.  Therefore, with our new notation, the generating function for all trees avoiding $t$ is given by $g_t(x)=g_{(t;\st{1}{1})}(x)$, because all ternary trees begin with the single vertex root.

The algorithm we use to find $g_{(t;\st{1}{1})}(x)$ is very similar to Rowland's algorithm for binary trees \cite{Rowland10}, but it accounts for an additional child at each internal vertex. The algorithm produces a sequence of generating functions using a recursive method. Initially, $g_{(t;\st{1}{1})}(x)$, the generating function we are interested in, is written in terms of another generating function. Then, for each new generating function $g_{(t;p)}(x)$ introduced in the recursive step, we deduce another recurrence in terms of other generating functions. If $t$ is a tree pattern, we will use $t_\ell$, $t_c$, and $t_r$ to denote the left, center, and right subtrees of $t$ respectively.  When appropriate we may write $t=\left(t_\ell t_ct_r\right)$.  We also require one more operation on tree patterns; we will use $s \cap t$ to denote the \emph{intersection} of tree patterns $s$ and $t$.  Conceptually, $s \cap t$ is the tree pattern produced by drawing $s$ and $t$ so that their roots coincide.  More formally, if $v$ is a single vertex, then $t \cap v = t$ and recursively $s \cap t = (s_\ell s_cs_r) \cap (t_\ell t_ct_r) = \left(\left(s_\ell \cap t_\ell \right)\left(s_c \cap t_c\right)\left(s_r \cap t_r\right)\right)$.  We note that although considering $s$ and $t$ as trees makes the intersection notation seem to be a misnomer, the set of trees with tree pattern $s$ at their root intersected with the set of trees with tree pattern $t$ at their root is indeed the set of trees with tree pattern $s \cap t$ at their root.  With this new notation, we are now prepared to give an algorithm to find $g_{(t;\st{1}{1})}(x)$ (for $t$ not equal to the single vertex tree).

First notice that $g_{(t; \st{1}{1})}(x) = x + g_{(t;\st{3}{1})}(x)$. This is because, unless $t$ is the single vertex tree, the generating function for trees with a single vertex at the root will always account for the one tree with one leaf.  Then, the rest of the trees avoiding $t$ have a \st{3}{1} pattern at the root, so $g_t(x)=g_{(t; \st{1}{1})}(x) = x + g_{(t;\st{3}{1})}(x)$.

Next, we have introduced a new generating function $g_{(t;\st{3}{1})}(x)$, and we need to derive a new recurrence for this function. To do this, we recognize that a $t$-avoiding tree with pattern $p$ at its root is made up of three $t$-avoiding subtrees with $p_\ell$, $p_c$, and $p_r$ at their respective roots. However, $g_{(t;p_\ell )}(x)$, $g_{(t;p_c)}(x)$, and $g_{(t;p_r)}(x)$ each account for trees avoiding $t$ individually, which overcounts when the trees have root pattern of $p_\ell \cap t_\ell$, $p_c \cap t_c$, and $p_r \cap t_r$ respectively. Therefore, we have $$g_{(t;p)}(x)= g_{(t;p_\ell)}(x) \cdot g_{(t;p_c)}(x) \cdot g_{(t;p_r)}(x) - g_{(t;p_\ell \cap t_\ell)}(x) \cdot g_{(t; p_c \cap t_c)}(x) \cdot g_{(t; p_r \cap t_r)}(x).$$  This observation holds not only for the tree pattern \st{3}{1}, but for any non-trivial tree pattern $p$.  We repeatedly use this observation to derive a new recurrence for each generating function $g_{(t;p)}(x)$ that arises in our computation until we have a complete system of equations.  We are guaranteed that such a system will eventually be complete since if $t$ has depth $d$, each pattern $p$ introduced in this process has depth at most $d$ and there are finitely many tree patterns of depth at most $d$.  Once we have a complete system of equations, we eliminate all unwanted variables until we have a functional equation for $g_{(t;\st{1}{1})}(x)$. Here, then, is the algorithm to compute $g_t(x)$ for any non-trivial ternary tree pattern $t$.

\begin{enumerate}
\item Initialize $\text{Eq}=\{g_{(t;\st{1}{1})}(x) = x + g_{(t;\st{3}{1})}(x)\}$, $Var=\{\st{1}{1}\}$, $\text{P}=\{\st{3}{1}\}$, and $P1=\emptyset$.
\item For $p \in \text{P}$, do:
\begin{itemize}
\item Let $\text{Eq} = \text{Eq} \cup \{g_{(t;p)}(x) = g_{(t;p_\ell)}(x) \cdot g_{(t;p_c)}(x) \cdot g_{(t;p_r)}(x) - g_{(t;p_\ell \cap t_\ell)}(x) \cdot g_{(t;p_c \cap t_c)}(x) \cdot g_{(t;p_r \cap t_r)}(x)\}$.
\item Let $\text{Var}=\text{Var} \cup \{p\}$.
\item Let $\text{P1}=(\text{P1} \cup \{p_\ell,p_c,p_r,p_\ell \cap t_\ell, p_c \cap t_c, p_r \cap t_r\}) \setminus \text{Var}$.
\end{itemize}
\item If $\text{P1} \neq \emptyset$ then let $\text{P}=\text{P1}$, $\text{P1}=\emptyset$, and go to Step 2.  

If $\text{P1}=\emptyset$ then eliminate all variables in $\text{Var} \setminus \{g_{(t;\st{1}{1})}(x)\}$ from the system of equations $\text{Eq}$ to compute a functional equation for $g_{(t;\st{1}{1})}(x)$.
\end{enumerate}

We illustrate this algorithm by using it to compute the avoidance generating function for $t_{7_3}$.

In step 1, we initialize
 
\begin{align*}
\text{Eq}&=\{g_{(t_{7_3};\st{1}{1})}(x)= x+g_{(t_{7_3};\st{3}{1})}(x)\}\\
\text{Var}&=\{\st{1}{1}\}\\
\text{P}&=\{\st{3}{1}\}\\
\text{P1}&=\emptyset\\
\end{align*}

In step 2, we consider $\st{3}{1} \in \text{P}$ to obtain
\begin{align*}
\text{Eq}&=\text{Eq} \cup \{g_{(t_{7_3};\st{3}{1})}(x)=(g_{(t_{7_3};\st{1}{1})}(x))^3-g_{(t_{7_3};\st{5}{1})}(x) \cdot (g_{(t_{7_3};\st{1}{1})}(x))^2\}\\
\text{Var}&= \text{Var} \cup \{\st{3}{1}\}\\
\text{P1}&=\{\st{5}{1}\}\\
\end{align*}

Since $\text{P1} \neq \emptyset$, relabel $\text{P1}$ as $\text{P}$, and consider $\st{5}{1} \in \text{P}$ to obtain

\begin{align*}
\text{Eq}&= \text{Eq} \cup \{g_{(t_{7_3};\st{5}{1})}(x)=g_{(t_{7_3};\st{3}{1})}(x)\cdot (g_{(t_{7_3};\st{1}{1})}(x))^2-g_{(t_{7_3};\st{5}{1})}(x)\cdot (g_{(t_{7_3};\st{1}{1})}(x))^2\}\\
\text{Var}&=\text{Var} \cup \{\st{5}{1}\}\\
\text{P1}&=\emptyset\\
\end{align*}

Since $\text{P1}=\emptyset$, we consider the three equations in $\text{Eq}$.  Let $a=g_{(t_{7_3};\st{1}{1})}(x)$, $b=g_{(t_{7_3};\st{3}{1})}(x)$, and $c=g_{(t_{7_3};\st{5}{1})}(x)$.  Then, 

\begin{align*}
a&=x+b\\
b&=a^3-ca^2\\
c&=ba^2-ca^2
\end{align*}

Eliminating $b$ and $c$ gives the equation $xa^4+xa^2-a+x=0$. 

For very simple trees, we can usually solve the resulting functional equation directly for $g_{(t;\st{1}{1})}(x)$; however, this quartic functional equation is a more characteristic result for complex tree patterns. Although this functional equation does not have a simple explicit solution, we can use it to compute arbitrarily many coefficients of the generating function $g_{t_{7_3}}(x)$ by making the substitution  $a=\sum_{n=0}^k \text{av}(n)x^n$, isolating the coefficients of each power of $x$, and setting them each equal to zero.  From the functional equation $xa^4+xa^2-a+x=0$, we find the sequence $\text{av}(n)$, $0 \leq n \leq 25$, to be 0, 1, 0, 1, 0, 3, 0, 11, 0, 46, 0, 207, 0, 979, 0, 4797, 0, 24138, 0, 123998, 0, 647615, 0, 3428493, 0, 18356714,$\dots$ (OEIS \seqnum{A006605} interspersed by zeros).

A complete classification of ternary trees $t$ with up to 9 leaves is given in the Appendix, along with functional equations for $g_t(x)$ and 20 terms of the corresponding avoidance sequences.

Note that given a tree pattern $t$, the algorithm given in this section generates a system of equations each of which has maximum total degree 3.  Auxiliary variables in this system will be eliminated to produce a polynomial functional equation for $g_t(x)$.  This guarantees that $g_t(x)$ is always algebraic.  Khoroshkin and Piontkovski \cite{KPTBA} independently showed that generating functions are algebraic in the case of general pattern-avoiding trees; however, their work is done in a different context.

\section{Bijections on Ternary Trees}\label{S:bijections}
Now that we have discussed two methods for enumerating pattern-avoiding trees, we look for connections between specific sets of those trees.  Recall that several of the ternary trees in this paper have had the same pattern avoidance sequence as one or more other trees.  That is, for some distinct $i$ and $j$, we found that $g_{t_{k_i}}(x) = g_{t_{k_j}}(x)$.  Such patterns are said to be \emph{Wilf equivalent}.  We will now explain why certain pairs of tree patterns $\{t_{k_i},t_{k_j}\}$ are Wilf equivalent. As Rowland did, we accomplish this through finding bijections between the members of $\text{Av}_{t_{k_i}}(n)$ and those of $\text{Av}_{t_{k_j}}(n)$.  In order to present these bijections in a clear and concise way, we first present an alternate notation for ternary trees.  We use this notation to describe bijections that explain all Wilf equivalences between 5 and 7-leaf ternary tree patterns.  We then generalize these maps.

\subsection{Word Notation for Trees}
In this subsection we represent trees as sets of integer words.  This notation easily extends to $m$-ary trees (i.e., trees where each vertex has 0 or $m$ children). At the foundation of this word representation are $m$-leaf parents.

\begin{definition} An \emph{$m$-leaf parent} is an internal vertex, $v$, of an $m$-ary tree such that $v$ has exactly $m$ children, all of which are leaves. \end{definition}

For example, $t_{7_4}$ has one 3-leaf parent and $t_{7_1}$ has two 3-leaf parents.  

Word notation represents an $m$-ary tree with a set of words, where each word follows the path from the root to an $m$-leaf parent.  We construct such a set from the following Tree-Set Algorithm:

\begin{enumerate}
\item Label the children of each internal vertex of an $m$-ary tree from left to right, $1$ through $m$.  (In ternary trees, then, a vertex's left child is labeled $1$, its center child $2$, and its right child $3$.)
\item Denote a path from the root of a tree to an $m$-leaf parent by an integer word $x_1 \cdots x_k$, where $k$ is the length of the path from the root to the $m$-leaf parent, such that $x_i \in \mathbb{Z}$ and $1\leq x_i \leq m$ for $1 \leq i \leq k$. The first number $x_1$ in the word represents the child of the root labeled $x_1$; $x_i$ then refers to the child of the vertex given by $x_{i-1}$ that is labeled with the value of $x_i$. 
\end{enumerate}

As an example, let us look again look at $t_{7_4}$ and $t_{7_1}$.  In $t_{7_4}$, the only $3$-leaf parent is reached by a path beginning at the root, going to the root's left child, then to this vertex's center child; in word notation, $t_{7_4}$ is denoted by the set $\{ 12 \}$.  For $t_{7_1}$, we reach one of its 3-leaf parents by going to the root's left child, and the other by the root's center child; this becomes a set $\{ 1,2 \}$.  Note that an ordered $m$-ary tree $T$ is uniquely defined by the set of paths from its root to each $m$-leaf parent.  The single vertex tree is represented by the empty set, $\{\}$, and the 3-leaf tree ($t_{3_1}$) by the set containing the empty word, $\{\epsilon\}$.  The set $\{21, 23, 321\}$ denotes the tree $T$ given in Figure \ref{Feg1} and the set $\{13, 223\}$ denotes the ternary tree given in Figure \ref{Fi:schroder}.  Also, note that the Tree-Set Algorithm above is clearly reversible.  In particular:

\begin{enumerate}
\item Create an $m$-ary tree from each word by the following procedure:
\begin{enumerate}
\item Create a root.
\item Give the root $m$ children, labeled left to right from $1$ to $m$.
\item For the word $x_1x_2 \cdots x_k$ give the $x_1$-st child of the root $m$ children. Label these children 1 through $m$ as before, repeating the process at each level where $x_i$ denotes giving $m$ children to the $x_i$-th child of the vertex that was given children by $x_{i-1}$. 
\end{enumerate}
\item Take the intersection of all trees obtained from step 1 to find the final $m$-ary tree.
\end{enumerate}

We note that representing trees as words or as sets of words is not a new idea.  For example, Stanley \cite{Stanley99} shows how to represent plane trees as certain integer sequences and as parenthesizations of words subject to certain constraints. 

Now that we have this alternate representation of $m$-ary trees, we consider specific properties of the word notation that are relevant to our questions of pattern avoidance.  First note that any set of words where one word is a prefix of another is redundant for the representation of trees.  Since each word specifies a path within a tree, if word $w_1$ is a prefix of word $w_2$, the path specified by $w_1$ is necessarily a part of the path specified by $w_2$, and including $w_1$ is superfluous.

\begin{definition}
Let \emph{$S$} be a set of words on the alphabet $\{1, \dots, m\}$, such that no word is a prefix of another word in $S$. Namely, $S$ is an arbitrary set of words describing an $m$-ary tree.  We write \emph{$A(m)$} for the set of all such sets $S$.
\end{definition}

Consider a set of words $ \{ L_i \}_{i=1}^l $ in $A(m)$ corresponding to tree pattern $t$.  Note that tree $T$, given by $ \{ M_h \}_{h=1}^p $, contains $t$ if there exist $ \{ M_{h_i} \}_{i=1}^l $ where each $M_{h_i}$ begins with the same (possibly empty) prefix as all other $M_{h_i}$'s, followed by exactly the ordered sequence of elements of $L_i$; this may or may not be then followed by an additional sequence of numbers.

For example, the tree pattern $t= \{1323, 1223\} $, is contained by $$T_1= \{ 323\mathbf{1323}, 11322, 323\mathbf{1223}112 \}. $$  Notice that, after the prefix $323$, the first and third words of $T_1$ have exactly the sequence of each word of $t$.  However, $T_2= \{ 31323, 1223 \} $ avoids $t$.  Even though it contains each sequence of numbers from the words of $t$, $T_2$'s words do not begin with the same prefix before the sequences begin.

In the following subsections, $B_{t_{k_i},t_{k_j}}:A(m) \rightarrow A(m)$ will denote a bijection from trees avoiding $t_{k_i}$ to trees avoiding $t_{k_j}$.  We find such maps by analyzing the word notation for our pattern-avoiding trees.  We are now ready to present bijections between Wilf equivalent tree patterns.

\subsection{The patterns \texorpdfstring{$t_{5_1}$}{t51} and \texorpdfstring{$t_{5_2}$}{t52}}

A tree avoids $t_{5_1}$ if none of its left vertices have children, and avoids $t_{5_2}$ if none of its center vertices have children.  In order to map a tree $T$ avoiding $t_{5_1}$ to a tree avoiding $t_{5_2}$, we define our bijection $B_{t_{5_1},t_{5_2}}(T)$ to ``switch'' the center subtree of every vertex with the left subtree of the same vertex.  In terms of word notation, a tree avoids $t_{5_1}$ if it has no 1's in its words, and avoids $t_{5_2}$ if it has no 2's in its words.  Thus, we define $B_{t_{5_1},t_{5_2}}(T)$ to replace every 1 in $T$'s words with 2, and every 2 with a 1.  For example, $B_{t_{5_1},t_{5_2}}( \{ 233, 32 \} ) = \{ 133, 31 \} $.  It is clear that this map is one-to-one, onto, and preserves number of leaves.

\subsection{The patterns \texorpdfstring{$t_{7_3}$}{t73} and \texorpdfstring{$t_{7_7}$}{t77}}

A tree avoids $t_{7_3}$ if no two consecutive left vertices have children, and avoids $t_{7_7}$ if no two consecutive center vertices have children.  In word notation, then, a tree avoids $t_{7_3}$ if it has no pairs of consecutive 1's, and it avoids $t_{7_7}$ if it has no pairs of consecutive 2's.  Thus, we define $B_{t_{7_3},t_{7_7}}(T) = B_{t_{5_1},t_{5_2}}(T)$.  That is, for a tree $T$ avoiding $t_{7_3}$, $B_{t_{7_3},t_{7_7}}(T)$ replaces each 1 in $T$'s set of words with a 2, and each 2 with a 1.  Because we originally defined $B_{t_{5_1},t_{5_2}}(T)$ on $A(3)$ defining $B_{t_{7_3},t_{7_7}}(T)$ in this way is reasonable.  For example, $B_{t_{7_3},t_{7_7}}(\{13, 22, 322\}) = \{23, 11, 311\}$.  As stated before, this map is a bijection.

\subsection{The patterns \texorpdfstring{$t_{7_1}$}{t71} and \texorpdfstring{$t_{7_2}$}{t72}}

For a tree $T$ to avoid $t_{7_1}$, no vertex $v$ can have children descending from both its left and center children; to avoid $t_{7_2}$, no vertex can have children descending from both its left and right children.  Therefore, we define a bijection $B_{t_{7_1},t_{7_2}}(T)$ to switch the right and center subtrees of each vertex.  Using word notation, this is equivalent to defining $B_{t_{7_1},t_{7_2}}(T)$ to replace every 2 with a 3 and every 3 with a 2.  For example, $B_{t_{7_1},t_{7_2}}( \{ 121, 1232, 322, 331 \} ) = \{ 131, 1323, 221, 233\}$.  Again, it is clear that $B_{t_{7_1},t_{7_2}}$ is one-to-one, onto, and preserves number of leaves.

\subsection{The patterns \texorpdfstring{$t_{7_4}$}{t74} and \texorpdfstring{$t_{7_5}$}{t75}}

Using word notation, a tree avoids $t_{7_4}$ if none of its words have a 1 followed by a 2; it avoids $t_{7_5}$ if none of its words have a 1 followed by a 3.  Therefore, we define $B_{t_{7_4},t_{7_5}}(T)=B_{t_{7_1},t_{7_2}}(T)$.  That is, $B_{t_{7_4},t_{7_5}}(T)$ replaces each 2 with a 3, and each 3 with a 2. For example, $B_{t_{7_4},t_{7_5}}(\{1313, 3213, 323\}) = \{1212, 2312, 232\}$.  As in all previous examples, it is clear that $B_{t_{7_4}, t_{7_5}}$ is a bijection that preserves number of leaves.

\subsection{The patterns \texorpdfstring{$t_{7_5}$}{t75} and \texorpdfstring{$t_{7_6}$}{t76}}

Using word notation, a tree avoids $t_{7_5}$ if none of its words have a 1 followed by a 3; it avoids $t_{7_6}$ if none of its words have a 2 followed by a 1.  Therefore, we define $B_{t_{7_5},t_{7_6}}(T)$  to replace every 1 with a 2, every 2 with a 3, and every 3 with a 1. $B_{t_{7_5},t_{7_6}}$ clearly maps words that do not contain the sequence 13 to words that do not contain the sequence 21.  For example, $B_{t_{7_5},t_{7_6}}(\{1, 21, 3212\}) = \{2, 32, 1323\}$.  As in all previous examples, it is again clear that  $B_{t_{7_5},t_{7_6}}$ is a bijection that preserves number of leaves.

\subsection{The patterns \texorpdfstring{$t_{7_1}$}{t71} and \texorpdfstring{$t_{7_4}$}{t74}}

This bijection is a bit more complicated than the previous maps.  Consider the function $B_{t_{7_1},t_{7_4}}(T)$ defined by the following procedure applied each word $L \in T$:
\begin{enumerate}
\item If $L$ contains no 1 followed by a 2, do nothing to $L$.
\item Otherwise, locate the first copy of $1 \cdots 12$ in $L=x_1 \cdots x_{\left|L\right|}$.  In particular, suppose that $x_1 \cdots x_{i-1}$ has no copy of $1 \cdots 12$, $x_{i-1}\neq 1$, $x_i=x_{i+1}=\cdots=x_j=1$ and $x_{j+1}=2$. We map $L$ to the pair of words $x_1 \cdots x_j,x_1 \cdots x_{i-1}x_{j+1} \cdots x_{\left|L\right|}$. So $x_1 \cdots x_j$ contains no occurrence of $12$.
\item Iterate step 2 for each new word created until we have produced $L_1,L_2,\dots,L_p$, none of which contain a 1 followed by a 2.
\end{enumerate}

For example, if $T_1=\{1232311121\}$, our first iteration maps $T_1$ to $\{1,232311121\}$.  Our second iteration gives $\{1,2323111,232321\}$.  All of these words are $1 \cdots 12$-free, so we have $$B_{t_{7_1},t_{7_4}}(T_1)= \{1,2323111,232321\}.$$ 

In order to prove that $B_{t_{7_1},t_{7_4}}$ is a bijection, we first construct an inverse function, $B_{t_{7_1},t_{7_4}}^{-1}(T)$.  We will start at the root of tree $T$ and work downward to consider all occurrences of $t_{7_1}$; that is, word pairs of the form $p_01s_1,p_02s_2$, where $p_0$ denotes a common prefix in the two words and $s_1,s_2$ are suffixes. Note that for each occurrence of $t_{7_1}$ it is possible for there to be multiple words of the form $p_02s_i$. Our inverse map is given by the following process:

\begin{enumerate}
\item On each level of $T$ beginning at the root, replace each occurrence of $p_02s_i$ that is part of an occurrence of $t_{7_1}$ with $p_012s_i$. If there are multiple occurrences of $t_{7_1}$ on the same level, then applying step 2 to one occurrence of $t_{7_1}$ does not affect the words denoting any other occurrence of $t_{7_1}$ on that level. Therefore, the order with which we apply this step to each occurrence of $t_{7_1}$ at the $i$th level is irrelevant.
\item Iterate step 1 at each successive level, beginning with the root.
\item Discard any words that are a prefix of another in $T$'s set of words.
\end{enumerate}

For example, 
\begin{itemize}
\item For $T_2 = \{1,2323111,232321 \}$, at the first level we have an occurrence of $t_3$ given by $1 \cap 2323111$ and from $1 \cap 232321$. So, from step 1, we replace $\{2323111, 232321\}$ with \\$\{12323111, 1232321\}$.
\item We now have the set $\{1,12323111,1232321\}$; step 2 requires that we check the next levels in order from lowest to highest and we see that $t_3$ does not occur until the sixth level, and is given by the words $12323\textbf{1}11,12323\textbf{2}1$.  Thus, we replace $1232321$ with $12323\textbf{1}21$ to obtain $\{1,12323111,12323121\}$.
\item With the third iteration of step 1, we replace $12323121$ with $12323\textbf{1}121$.  The fourth iteration replaces $123231121$ with $12323\textbf{1}1121$, and we are left with the set $\{1, \allowbreak 12323111, \allowbreak 1232311121\}$. After applying step 3, we see that $B_{t_{7_1},t_{7_4}}^{-1}(T_2)=\{1232311121\}$. We note that $B_{t_{7_1},t_{7_4}}^{-1}(T_2)= T_1$ as expected from our earlier example. 
\end{itemize}

It is easy to see that $B_{t_{7_1}t_{7_4}}(T)$ does in fact map trees avoiding $t_{7_1}$ to trees avoiding $t_{7_4}$.  It is also clear that $B_{t_{7_1},t_{7_4}}^{-1}(B_{t_{7_1},t_{7_4}}(T))=T$, so $B_{t_{7_1},t_{7_4}}(T)$ has a clear inverse.  Thus $B_{t_{7_1},t_{7_4}}(T)$ is a bijection.

We further claim that $B_{t_{7_1},t_{7_4}}(T)$ preserves the numbers of leaves of $T$.  Step 1 in our bijection is the only step that changes the structure of $T$. Consider an arbitrary occurrence of $t_{7_4}$ such that the path from the root of $T$ to the root of the occurrence is given by the prefix $p_0$ and such that there is no occurrence of $t_{7_4}$ in $p_0$. Then step 1 in our bijection will map all words with the prefix $p_012$ to the word $p_01$ and words with the prefix $p_02$. As a result of no longer having any words with the prefix $p_012$, we see that the vertex given by the path $p_01$ has one more leaf in $B_{t_{7_1},t_{7_4}}(T)$ than it did in $T$ (namely its second child is a leaf, but was not a leaf in $T$). However we also see that the vertex given by the path $p_0$ has one less child that is a leaf as a result of having words with $p_02$ as a prefix. There could not have already been a word in $T$ with $p_02$ as a prefix since this would entail having an occurrence of $t_{7_1}$ at the vertex given by the path $p_0$. Having the word $p_01$ does not add to or subtract from the number of leaves we have since it is a prefix of the word it replaces and thus creates no new vertices. With each iteration of step 2 on an occurrence of $t_{7_4}$ this same reasoning holds. So we see that the number of leaves in $B_{t_{7_1},t_{7_4}}(T)$ is the same as the number of leaves in $T$.

We have now accounted for all Wilf equivalences between tree patterns with 5 or 7 leaves.

\subsection{General Approaches to Bijections}

In this section, we generalize the previous six bijections to bijections for a larger class of tree patterns.  In particular, we present a bijection between certain tree patterns that have the same avoidance generating function and have only one $m$-leaf parent.  These are tree patterns that are represented by a single word in word notation.

Consider two $m$-ary tree patterns $\{L_1\}$ and $\{L_2\}$ in word notation.  If there exists a bijection $b: \{1,\dots , m\} \to \{1, \dots , m\}$ that maps $L_1$ to $L_2$, then there is a bijection $B_{L_1,L_2}(T)$ from the set of trees avoiding $\{L_1\}$ to the set of trees avoiding $\{L_2\}$.  In particular, $B_{L_1,L_2}(T)$ is the map that replaces each letter $i$ in the lists of an $L_1$-avoiding tree with $b(i)$.

For example, if $\{L_1\} = \{11213\}$, $\{L_2\} = \{22321\}$, the map $b$ sends $1 \mapsto 2$, $2 \mapsto 3$, and $3 \mapsto 1$.  If we consider a tree $T$ that avoids $\{L_1\}$ given in list notation, then $B_{L_1,L_2}(T)$ replaces each integer $i$ with $b(i)$.  For example $B_{L_1,L_2}(\{13231,22321\}) = \{21312,33132\}$.

In general, if $T$ is a tree denoted by a set of words that avoids $\{L_1\}$, when we apply the bijection $b$ to the letters of $T$, we have $B_{L_1,L_2}(T)$ to be a set of words in which no word contains any instance of $\{L_2\}$, that is, $B_{L_1,L_2}(T)$ avoids $\{L_2\}$.

\begin{theorem} 
\label{superpattern}
The map $B_{L_1,L_2}(T)$ is one-to-one, onto, and preserves the number of leaves of $T$.
\end{theorem}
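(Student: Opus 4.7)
The plan is to verify four things in turn: (i) the letter-wise substitution is a well-defined self-map of $A(m)$; (ii) the image of a $\{L_1\}$-avoider is a $\{L_2\}$-avoider; (iii) the map is a bijection between these two sets; and (iv) the leaf count is preserved. Since $b$ is a bijection of $\{1,\dots,m\}$, its letter-wise extension to words is length-preserving and injective, and a word $u$ is a prefix of a word $v$ if and only if $b(u)$ is a prefix of $b(v)$ (apply $b$ or $b^{-1}$ letter by letter). This immediately gives (i): the image of a prefix-free set of words is again prefix-free, so $B_{L_1,L_2}(T)\in A(m)$.

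For (ii), I would appeal to the characterization of containment in word notation stated just before the theorem. Since both $\{L_1\}$ and $\{L_2\}$ consist of a single word (they have only one $m$-leaf parent), a tree $T=\{M_h\}_{h=1}^p$ contains $\{L_1\}$ precisely when some $M_h$ may be written $M_h=p_0\,L_1\,s$ for a (possibly empty) prefix $p_0$ and suffix $s$. Applying $b$ letter-wise sends this word to $b(p_0)\,L_2\,b(s)$, which is an occurrence of $\{L_2\}$ in $B_{L_1,L_2}(T)$; running the same argument with $b^{-1}$ shows the converse. Hence $T$ avoids $\{L_1\}$ if and only if $B_{L_1,L_2}(T)$ avoids $\{L_2\}$, so $B_{L_1,L_2}$ really does map $\mathrm{Av}_{\{L_1\}}$ into $\mathrm{Av}_{\{L_2\}}$.

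For (iii), the same construction using $b^{-1}$ (which carries $L_2$ to $L_1$) gives a map $B_{L_2,L_1}$ from $\{L_2\}$-avoiders to $\{L_1\}$-avoiders, and since letter-wise composition of $b$ with $b^{-1}$ is the identity on every word, $B_{L_2,L_1}\circ B_{L_1,L_2}$ and $B_{L_1,L_2}\circ B_{L_2,L_1}$ are both identity maps. For (iv), I would invoke the identity $L=(m-1)I+1$ relating the number of leaves $L$ and the number of internal vertices $I$ of any nonempty $m$-ary tree. In the word encoding, the internal vertices of $T$ correspond to the prefix-closure of the set of words describing $T$ (including the empty word and the words themselves, which index the $m$-leaf parents). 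Because prefix-closure commutes with the letter-wise action of $b$ and $b$ is injective on words, the cardinality of this closure is unchanged, so $I$, and hence $L$, is preserved.

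The only point requiring genuine care is the translation between the general containment definition (which involves a shared prefix $p_0$ across several words simultaneously) and its single-word specialization needed in step (ii); once one observes that this collapses to the simple ``$M_h = p_0\,L\,s$'' condition, everything else is bookkeeping and the theorem follows.
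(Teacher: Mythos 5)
Your proposal is correct, and parts of it (injectivity and surjectivity via the fact that the letter-wise substitution is induced by the permutation $b$, with $b^{-1}$ giving the explicit inverse) coincide with the paper's argument; you also verify the avoidance-preservation and prefix-freeness claims explicitly, which the paper disposes of in the discussion preceding the theorem rather than in the proof itself. Where you genuinely diverge is the leaf count: the paper argues locally, matching each internal vertex $v_1$ of $T$ (path $p_1$) with the vertex of $B_{L_1,L_2}(T)$ at path $b(p_1)$ and observing that the number $x$ of distinct letters extending that path as a prefix is unchanged, so both vertices have $m-x$ leaf children; summing over vertices preserves the leaf count. You instead argue globally, identifying the internal vertices with the prefix-closure of the word set, noting that prefix-closure commutes with the letter-wise action of $b$ so the number $I$ of internal vertices is preserved, and then invoking $L=(m-1)I+1$. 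Your route is arguably cleaner bookkeeping (one cardinality comparison plus a standard identity), while the paper's per-vertex correspondence gives slightly finer information — it shows the leaf \emph{distribution} over corresponding vertices is preserved, not merely the total — which is in the spirit of its later discussion of replacement bijections. Both arguments are complete; your single-word specialization of the containment criterion ($M_h=p_0\,L_1\,s$) is also handled correctly.
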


\begin{proof}
The fact that $B_{L_1,L_2}(T)$ is one-to-one and onto follows from the fact that $B_{L_1,L_2}(T)$ is induced by the map $b$, which is just a permutation of $\{1,\dots,m\}$.

To show that $B_{L_1,L_2}(T)$ preserves the number of leaves of $T$, it is enough to show that for each internal vertex $v_1$ in a given tree $T$ there is a unique internal vertex $v_2$ in $B_{L_1,L_2}(T)$ that has the same number of children that are leaves as does $v_1$. Consider the word $p_1$ describing the path to $v_1$ in $T$. Let $x$ be the number of distinct letters in $\{1,\dots,m\}$ that follow $p$ as a prefix in a word of $T$.  Then $v_1$ has $m-x$ children that are leaves. Now consider $B_{L_1,L_2}(\{p_1\})=\{p_2\}$.  Since $x$ distinct letters followed $p_1$ in $T$, there must be $x$ distinct letters that follow $p_2$ as a prefix in $B_{L_1,L_2}(T)$, meaning the vertex whose path from the root is given by $p_2$ has $m-x$ children that are leaves.  This holds true for every vertex of $T$, proving that the number of leaves remains the same.
\end{proof}

These generalized bijections for trees have one more interesting property.  Consider the word notation for an $m$-ary tree.  This can also be read as the word notation for an $M$-ary tree for any $M \geq m$.  This says that once we have a bijection $B_{L_1,L_2}(T)$ for a pair of $m$-ary trees, we have necessarily discovered a Wilf equivalence for corresponding pairs of $M$-ary trees for each $M \geq m$.

\subsection{Final notes on bijections}

In this section we have given bijections explaining all non-trivial Wilf equivalences between ternary tree patterns with 5 or 7 leaves.  We note that all but one of the bijections given in Section \ref{S:bijections} are guaranteed to exist by the results of Section 4.8.  The bijection between $t_{7_1}$- and $t_{7_4}$-avoiding trees is the lone exception to this method, since it involved ``cutting'' the integer words representing trees, rather than just relabeling them. In the appendix, we also give computational data for equivalent ternary tree patterns with 9 leaves; it turns out that all 9-leaf ternary tree patterns fall into just three distinct Wilf classes.  Many, but not all, of these equivalences can also be explained with the generalized bijections of Section 4.8.

Moreover, all bijections except the one for $t_{7_1}$ and $t_{7_4}$ may clearly be seen as replacement bijections in the sense of Rowland's binary tree patterns paper \cite{Rowland10}.  In fact, we can see that this last bijection is not a replacement bijection by considering a well-chosen example.  Consider the tree whose word representation is $\{1232311121\}$, as shown in Figure \ref{nonreplace}. This tree contains two copies of the tree pattern $t_{7_4}$, or $\{12\}$.  One of those copies is at the root, and the other is near the bottom of the tree, with one copy of $t_{3_1}$ appended to its deepest left leaf.  On the other hand, the image of this tree is represented by $\{1,2323111,232321\}$ (also shown in Figure \ref{nonreplace}).  This tree contains two copies of the tree $t_{7_1}$, or $\{1,2\}$, one at the root, and one deeper in the tree.  However, this lower copy has a copy of $t_{5_1}$ and a copy of $t_{3_1}$ appended to its deepest leaves.  In a true replacement bijection, the copies of $t_{7_4}$ should be transformed to copies of $t_{7_1}$, \emph{and} the subtrees descending from leaves of a copy of $t_{7_4}$ should be moved in entirety to be subtrees descending from leaves of a copy of $t_{7_1}$.  This is clearly not the case in this bijection.  It remains an open question to determine whether there exists a replacement bijection between ternary trees avoiding $t_{7_1}$ and ternary trees avoiding $t_{7_4}$.

\begin{figure}[bht]
\begin{center}
\includegraphics{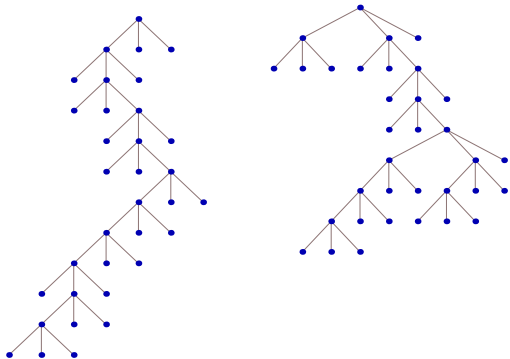}
\end{center}
\caption{The ternary trees with word representations $\{1232311121\}$ and $\{1, \allowbreak 2323111, \allowbreak 232321\}$}
\label{nonreplace}
\end{figure}

Many of our bijections overlap with Rowland's idea of replacement bijections; however, we propose that considering trees as sets of integer words provides more insight into the process of developing Wilf equivalent tree patterns.

\section{Conclusion}

Throughout this paper, we have investigated pattern avoidance in ternary trees, extending previous work for binary trees.  We began by finding recurrence relations and generating functions by hand for several simple ternary tree patterns.  To make the computation of avoidance sequences easier, however, we developed an algorithm, based on Rowland's algorithm for binary trees, to find the generating function for trees avoiding any given tree pattern.  Next we classified the tree patterns, grouping together those with the same avoidance sequence. From here, we were able to find bijections between the sets of trees avoiding specific pairs of two equivalent tree patterns, $t_{k_i}$ and $t_{k_j}$; for these pairs of trees, we transformed any tree avoiding $t_{k_i}$ into one that avoids $t_{k_j}$.  After stating several bijections between specific pairs of tree patterns, we then generalized this to bijections between trees avoiding patterns in the same equivalence class of trees under permutations of $\{1, \dots, m\}$ .

\section{Acknowledgement}
The authors thank Eric Rowland for a number of presentation suggestions and for support with generating the many tree graphics required for this paper.

\section*{Appendix}

This appendix lists ternary trees with at most nine leaves, classifying them by their avoidance generating function and sequence.  For each class, we give a functional equation satisfied by $a=g_t(x)$, and we list the first 20 terms (including zeros) of the corresponding avoidance sequence.  If $g_t(x)$ has a simple explicit form it is included, and if the avoidance sequence for a class is listed in the Online Encyclopedia of Integer Sequences (without interspersed zeros) \cite{OEIS}, we include the appropriate reference.  For brevity, left--right reflections are omitted.

\vspace{5mm}

\hrule

\vspace{5mm}

\textit{Class 5}

\begin{center}
\includegraphics{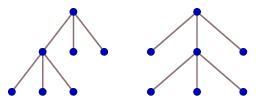}
\end{center}

\begin{itemize}
\item $xa^2-a+x=0$
\item $g_t(x)=\frac{1- \sqrt{1-4x^2}}{2x}$
\item $0, 1, 0, 1, 0, 2, 0, 5, 0, 14, 0, 42, 0, 132, 0, 429, 0, 1430, 0, 4862,\dots$
\item OEIS \seqnum{A000108}: Catalan numbers
\end{itemize}

\hrule 

\vspace{5mm}

\textit{Class 7.1}

\begin{center}
\includegraphics{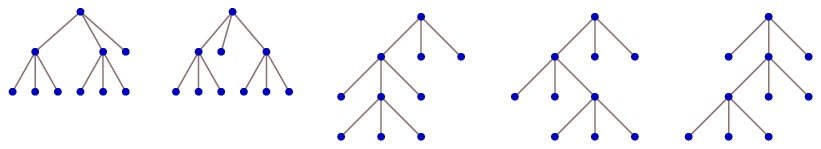}
\end{center}

\begin{itemize}
\item $2xa^2-x^2a-a+x=0$
\item $g_t(x)=\frac{(x^2+1)- \sqrt{(x^2+1)^2-8x^2}}{4x}$
\item $0, 1, 0, 1, 0, 3, 0, 11, 0, 45, 0, 197, 0, 903, 0, 4279, 0, 20793, 0, 103049,\dots$
\item OEIS \seqnum{A001003}: Little Schr\"{o}der numbers
\end{itemize}

\hrule

\vspace{5mm}

\textit{Class 7.2}

\begin{center}
\includegraphics{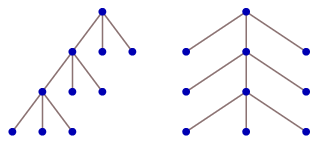}
\end{center}

\begin{itemize}
\item $xa^4+xa^2-a+x=0$
\item $0, 1, 0, 1, 0, 3, 0, 11, 0, 46, 0, 207, 0, 979, 0, 4797, 0, 24138, 0, 123998,\dots$
\item OEIS \seqnum{A006605}: number of modes of connections of $2n$ points
\end{itemize}

\hrule
 
\vspace{5mm}

\textit{Class 9.1}

\begin{center}
\includegraphics{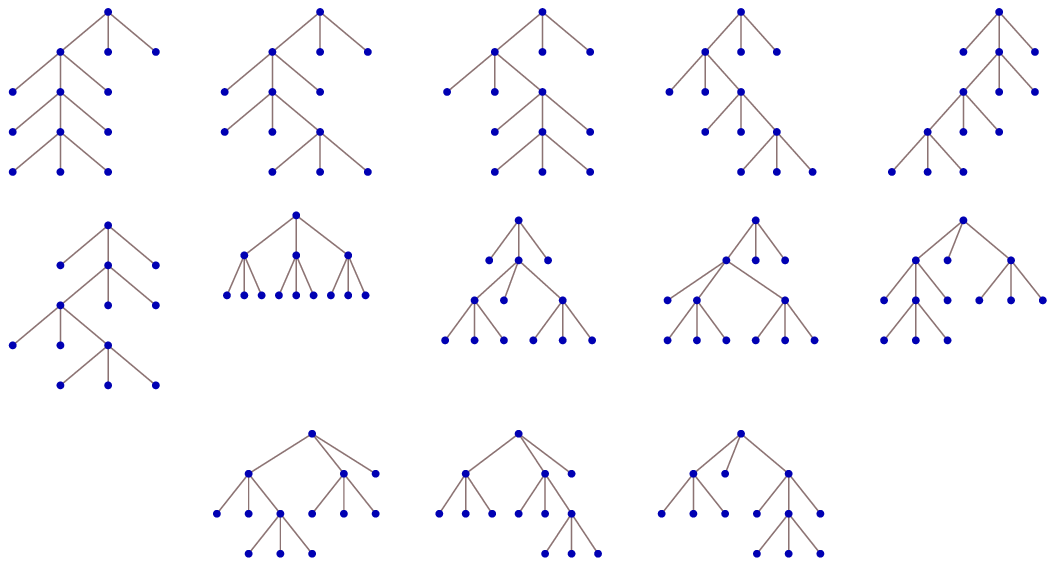}
\end{center}

\begin{itemize}
\item $ 3xa^2-3x^2a-a+x^3+x=0$
\item $g_t(x)=\frac{(3x^2+1)- \sqrt{1-6x^2-3x^4}}{6x}$
\item $0, 1, 0, 1, 0, 3, 0, 12, 0, 54, 0, 261, 0, 1323, 0, 6939, 0, 37341, 0, 205011, \dots$
\item OEIS A107264:	Expansion of $\frac{-(3x-1)-\sqrt{1-6x-3x^2}}{6 x^2}$. 
\end{itemize}

\hrule 
\vspace{5mm}

\textit{Class 9.2}

\begin{center}
\includegraphics{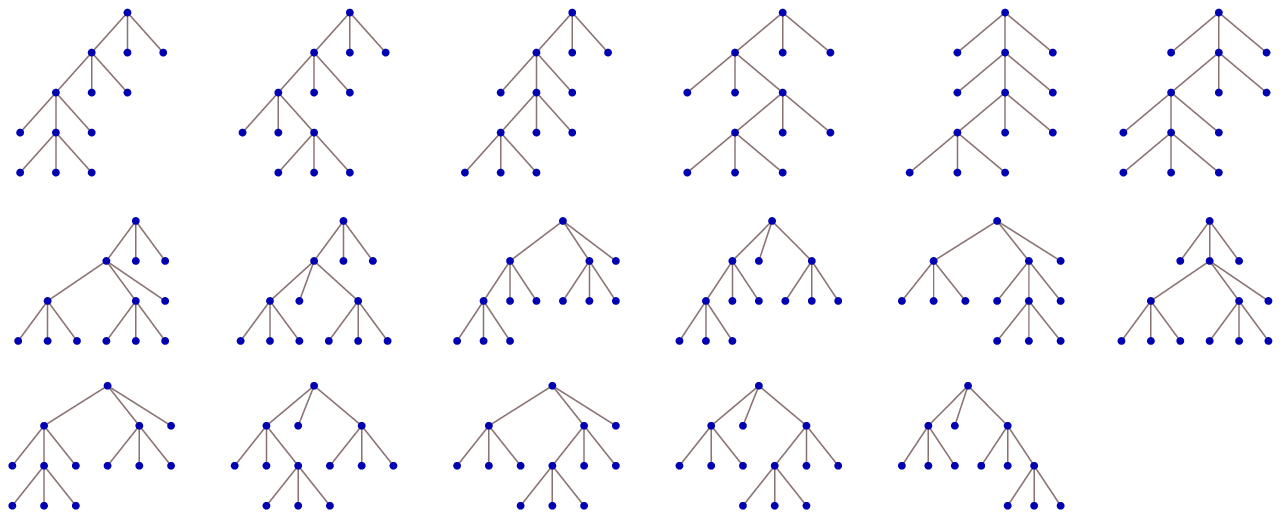}
\end{center}

\begin{itemize}
\item $xa^4-x^2a^3+2xa^2-x^2a-a+x=0$
\item $0, 1, 0, 1, 0, 3, 0, 12, 0, 54, 0, 261, 0, 1324, 0, 6954, 0, 37493, 0, 206316, \dots$
\item OEIS A200740: Generating function satisfies $x^2A^4(x)-x^2A^3(x)+2xA^2(x)-xA(x)-A(x)+1=0$
\end{itemize}

\hrule 

\vspace{5mm}

\textit{Class 9.3}

\begin{center}
\includegraphics{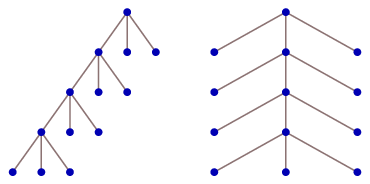}
\end{center}

\begin{itemize}
\item $xa^6+xa^4+xa^2-a+x=0$
\item $0, 1, 0, 1, 0, 3, 0, 12, 0, 54, 0, 262, 0, 1337, 0, 7072, 0, 38426, 0, 213197, \dots$
\item OEIS A186241: Generating function given by $x^3A^6(x)+x^2A^4(x)+xA^2(x)-A(x)+1=0$
\end{itemize}

\hrule

\end{document}